\title{The probability distribution as a computational resource for randomness testing}
\author{Bj\o rn Kjos-Hanssen}
\newtheorem{thm}{Theorem}[section]
\newtheorem{pro}[thm]{Proposition}
\newtheorem{cor}[thm]{Corollary}
\theoremstyle{definition}
\newtheorem{df}[thm]{Definition}
\newtheorem{que}[thm]{Question}
\newcommand{\mc}{\mathcal}
\renewcommand{\to}{\rightarrow}
\newcommand{\restrict}{\upharpoonright}
\newcommand{\nil}{\varnothing}
\newcommand{\Y}{\overline Y}
\renewcommand{\P}{\mathbb{P}}
\newcommand{\E}{\mathbb{E}}
\newcommand{\R}{\mathbb{R}}
\begin{document}
	\maketitle

	\begin{abstract}
	When testing a set of data for randomness according to a probability distribution that depends on a parameter, access to this parameter can be considered as a computational resource. We call a randomness test \emph{Hippocratic} if it is not permitted to access this resource. In these terms, we show that for Bernoulli measures $\mu_p$, $0\le p\le 1$ and the Martin-L\"of randomness model, Hippocratic randomness of a set of data is the same as ordinary randomness. The main idea of the proof is to first show that from Hippocrates-random data one can Turing compute the parameter $p$. However, we show that there is no single Hippocratic randomness test such that passing the test implies computing $p$, and in particular there is no universal Hippocratic randomness test.
	\end{abstract}

	\section{Introduction}

		The fundamental idea of statistics is that by repeated experiment we can learn the underlying distribution of the phenomenon under investigation. In this paper we partially quantify the amount of randomness required to carry out this idea. We first show that ordinary Martin-L\"of randomness with respect to the distribution is sufficient. Somewhat surprisingly, however, the picture is more complicated when we consider a weaker form of randomness where the tests are \emph{effective}, rather than merely \emph{effective relative to the distribution}. We show that such \emph{Hippocratic} randomness actually coincides with ordinary randomness in that the same outcomes are random for each notion, but the corresponding test concepts do \emph{not} coincide: while there is a universal test for ordinary ML-randomness, there is none for Hippocratic ML-randomness.  

		For concreteness we will focus on the classical Bernoulli experiment, although as the statistical tools we need are limited to Chebyshev's inequality and the strong law of large numbers, our result works also in  the general situation of repeated experiments in statistics, where an arbitrary sequence of independent and identically distributed random variables is studied. 

		When using randomness as a computational resource, the most convenient underlying probability distribution may be that of a fair coin. In many cases, fairness of the proverbial coin may be only approximate. Imagine that an available resource generates randomness with respect to a distribution for which the probability of heads is $p\ne 1/2$. It is natural to assume that $p$ is not a computable number if the coin flips are generated with  contributions from a physical process such as the flipping of an actual coin. The non-computability of $p$ matters strongly if an infinite sequence of coin flips is to be performed. In that case, the gold standard of algorithmic randomness is \emph{Martin-L\"of randomness}, which essentially guarantees that no algorithm (using arbitrary resources of time and space) can detect any regularities in the sequence. If $p$ is non-computable, it is possible that $p$ may itself be a valuable resource, and so the question arises whether a ``truly random'' sequence should look random even to an adversary equipped with the distribution as a resource. In this article we will show that the question is to some extent moot, as these types of randomness coincide. On the other hand, while there is a universal test for randomness in one case, in the other there is not.   This article can be seen as a follow-up to Martin-L\"of's paper where he introduced his notion of algorithmic randomness and proved results for Bernoulli measures \cite{ML}.

		It might seem that when testing for randomness, it is essential to have access to the distribution we are testing randomness for. On the other hand, perhaps if the results of the experiment are truly random we should be able to use them to discover the distribution for ourselves, and then once we know the distribution, test the results for randomness. However, if the original results are not really random, we may ``discover'' the wrong distribution.  We show that there are tests that can be effectively applied, such that if the results are random then the distribution can be discovered, and the results will then turn out to be random even to someone who knows the distribution. While these tests can individually be effectively applied, they cannot be effectively enumerated as a family. On the other hand, there is a single such test (due to Martin-L\"of) that will reveal whether the results are random for \emph{some} (Bernoulli) distribution, and another (introduced in this paper) that if so will reveal that distribution. 

		In other words, one can effectively determine whether randomness for some distribution obtains, and if so determine that distribution. There is no need to know the distribution ahead of time to test for randomness with respect to an unknown distribution. If we suspect that a sequence is random with respect to a measure given by the value of a parameter (in an effective family of measures), there is no need to know the value of that parameter, as we can first use Martin-L\"of's idea to test for randomness with respect to \emph{some} value of the parameter, and then use the fundamental idea of statistics to \emph{find} that parameter.   Further effective tests can be applied to compare that parameter $q$ with rational numbers near our target parameter $p$, leading to the conclusion that {if all \emph{effective} tests for randomness with respect to parameter $p$ are passed, then all tests \emph{having access to $p$ as a resource} will also be passed}. But we need the distribution to know \emph{which} effective tests to apply. Thus we show that randomness testing with respect to a target distribution $p$ can be done by two agents each having limited knowledge: agent 1 has access to the distribution $p$, and agent 2 has access to the data $X$. Agent 1 tells agent 2 which tests to apply to $X$. 

		The more specific point is that the information about the distribution $p$ required for randomness testing can be encoded in a set of effective randomness tests; and the encoding is intrinsic in the sense that the ordering of the tests does not matter, and further tests may be added: passing any collection of tests that include these is enough to guarantee randomness. From a syntactic point of view, whereas randomness with respect to $p$ is naturally a $\Sigma^0_2(p)$ class, our results show that it is actually an intersection of $\Sigma^0_{2}$ classes.

	\section{Definitions}
		If $X$ is a $2=\{0,1\}$ valued random variable such that $\P(X=1) = p$ (where $\P$ denotes probability)
		then $X$ is called a Bernoulli($p$) random variable.
		The Bernoulli measure $\mu_{p}$ on $2^{\omega}$ is defined by the stipulation that for each $n\in\omega=\{0,1,2,\ldots\}$, 
		\[
		\begin{aligned}
			\mu_p(\{X: X(n)=1\})&=p, \\
			\mu_p(\{X:X(n)=0\})&=1-p,
		\end{aligned}
		\]
		and $X(0),X(1),X(2),\ldots$ are mutually independent random variables.

		\begin{df}
			A $\mu_p$-ML-randomness test is a sequence $\{U^p_n\}_n$ that is uniformly $\Sigma^0_1(p)$ with $\mu(U^p_n)\le 2^{-n}$, where $2^{-n}$ may be replaced by any computable function that goes to zero effectively. 
	 
			A $\mu_p$-ML-randomness test is \emph{Hippocratic} if there is a $\Sigma^0_1$ class $S\subseteq 2^\omega\times\omega$ such that $S=\{(X,n):X\in U_n^p\}$. Thus, $U_n=U_n^p$ does not depend on $p$ and is uniformly $\Sigma^0_1$.  
		If $X$ passes all $\mu_p$-randomness tests then $X$ is \emph{$\mu_p$-random}. If $X$ passes all Hippocratic tests then $X$ is \emph{Hippocrates $\mu$-random}.
		\end{df}

		To explain the terminology: Andrzej Szczeklik, in ``Catharsis: on the art of medicine'' (\cite{Szczeklik}, page 18) writes
		\begin{quote}
			On his native island of Kos, only about three hundred kilometers from Delphi, Hippocrates did not consult the oracle to seek out the harbingers of fate, but looked at his patient's features instead. 
		\end{quote}
	
		In the definition of Hippocratic randomness, like the ancient medic Hippocrates we are not consulting the oracle of Delphi (i.e., an oracle for the real number $p$) but rather looking for ``natural causes''. This level of randomness recently arose in the study of randomness extraction from subsets of random sets \cite{MRL}. 

		We will often write \emph{$\mu_p$-random} instead of \emph{$\mu_p$-ML-random}, as we work in the Martin-L\"of mode of randomness throughout, except when discussing a conjecture at the end of this paper.

	\section{Chebyshev's inequality}

		We develop this basic inequality from scratch here, in order to emphasize how generally it holds. For an event $A$ in a probability space, we let $\mathbf 1_A$, the indicator function of $A$, equal 1 if $A$ occurs, and $0$ otherwise. The \emph{expectation} of a discrete random variable $X$ is 
		\[
			\E(X)=\sum_x x\cdot\P(X=x).
		\]
		where the sum is over all outcomes in the sample space. Thus $\E(X)$ is the average value of $X$ over repeated experiments. It is immediate that
		\[
			\E (\mathbf 1_{A}) = \P(A).
		\]
		Next we observe that the random variable that is equal to $a$ when a nonnegative random variable $X$ satisfies $X\ge a$ and $0$ otherwise, is always dominated by $X$. That is, 
		\[
			a\cdot \mathbf 1_{\{X\ge a\}} \le X.
		\]
		Therefore, taking expectations of both sides,
		\[
			a\cdot \P{\{X\ge a\}} \le \E(X).
		\]
		In particular, for any random variable $X$ with $\E(X)=\xi\in\mathbb R$ we have 
		\[
			a^2\cdot \P{\{(X-\xi)^2\ge a^2\}} \le \E ((X-\xi)^2).
		\]
		Let $\sigma^2$ denote variance: $\sigma^{2}=\E((X-\xi)^2)$. Then
		\[
			\P{\{|X-\xi|\ge |a|\}} \le \sigma^2/a^2.
		\]
		If we let $k\in\omega$ and replace $a$ by $k\sigma$, then
		\[
			\P{\{|X-\xi|\ge k\sigma\}} \le \sigma^2/(k\sigma)^2 = 1/k^2.
		\]
		This is Chebyshev's inequality, which in words says that the probability that we exceed the mean $\xi$ by $k$ many standard deviations $\sigma$ is rather small.

	\section{Results for ordinary randomness}

		We first prove a version of the phenomenon that for samples of sufficiently fast growing size, the sample averages almost surely converge quickly to the mean.
					
		\begin{pro}\label{one}
		Consider a sequence $Y=\{Y_n\}_{n\in\omega}$ of independent Bernoulli($p$) random variables, with the sample average 
		\[
			\Y_n := \frac{1}{n}\sum_{i=0}^{n-1} Y_i.
		\]
		Let $N(b)=2^{3b-1}$ and let 
		\[
			U^p_d=\bigcup_{b\ge d}\{Y: |\Y_{N(b)}-p|\ge 2^{-b}\}.
		\]
		Then $U^p_d$ is uniformly $\Sigma^0_1(p)$, and $\mu_p(U^p_d)\le 2^{-d}$, i.e., $\{U^p_d\}_{d\in\omega}$ is a $\mu_p$-ML-test.
		\end{pro}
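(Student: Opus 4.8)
The plan is to verify the two assertions separately, as both follow directly from the setup and Chebyshev's inequality developed in the previous section.

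First I would address the complexity claim. The set $U^p_d$ is a union over $b\ge d$ of events of the form $\{Y: |\Y_{N(b)}-p|\ge 2^{-b}\}$. Each such event depends only on the first $N(b)$ bits of $Y$ through the finite sum $\sum_{i=0}^{N(b)-1}Y_i$, so $\Y_{N(b)}$ is a rational-valued function of finitely many bits. The condition $|\Y_{N(b)}-p|\ge 2^{-b}$ is then an inequality between a computable rational (given the finite prefix of $Y$) and the real $p$; since $p$ is available as an oracle, the strict version $|\Y_{N(b)}-p| > 2^{-b}$ determines a $\Sigma^0_1(p)$ condition, and taking the countable union over $b\ge d$ preserves $\Sigma^0_1(p)$. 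Uniformity in $d$ is immediate because the only dependence on $d$ is the lower bound of the union. I would note that one should work with the open (strict inequality) version to get a genuine $\Sigma^0_1(p)$ class, which does not affect the measure bound.

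The substantive part is the measure estimate $\mu_p(U^p_d)\le 2^{-d}$. Here I would apply Chebyshev's inequality to the random variable $\Y_{N(b)}$. The mean is $\E(\Y_{N(b)})=p$ and, by independence, the variance is $\sigma^2 = p(1-p)/N(b)\le 1/(4N(b))$, using $p(1-p)\le 1/4$. To bound $\mu_p\{|\Y_{N(b)}-p|\ge 2^{-b}\}$ I would write $2^{-b}=k\sigma$ and read off from Chebyshev that this probability is at most $\sigma^2/2^{-2b} = 2^{2b}\sigma^2 \le 2^{2b}/(4N(b))$. With $N(b)=2^{3b-1}$ this gives a bound of $2^{2b}/(4\cdot 2^{3b-1}) = 2^{2b}/2^{3b+1} = 2^{-b-1}$. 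Then by countable subadditivity,
\[
\mu_p(U^p_d)\le \sum_{b\ge d} 2^{-b-1} = 2^{-d},
\]
which is exactly the desired bound.

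The main obstacle, such as it is, lies entirely in getting the arithmetic of the exponents to align: the definition $N(b)=2^{3b-1}$ is tuned precisely so that the per-term Chebyshev bound decays geometrically and the geometric series sums to $2^{-d}$. I expect no conceptual difficulty, only the bookkeeping of tracking the $p(1-p)\le 1/4$ factor and confirming that the chosen $N(b)$ forces each term below $2^{-b-1}$ so the tail sum telescopes to the clean bound $2^{-d}$. The final observation that $\{U^p_d\}_d$ is a $\mu_p$-ML-test is then just the conjunction of the two verified properties against the definition of a test.
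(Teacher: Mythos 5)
Your proof is correct and follows essentially the same route as the paper: Chebyshev's inequality applied to $\Y_{N(b)}$ with the variance bound $p(1-p)/N(b)\le 1/(4N(b))$, yielding the per-term bound $2^{-(b+1)}$ and the geometric tail sum $2^{-d}$. The paper parametrizes the computation via $k$ standard deviations and solves for $N(b)$, whereas you plug in $N(b)=2^{3b-1}$ directly, but the arithmetic and the idea are identical.
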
 
		The idea of the proof is to use Chebyshev's inequality and the fact that the variance of a Bernoulli($p$) random variable is bounded (in fact, bounded by $1/4$).

		\begin{proof}
		The fact that $U^p_d$ is $\Sigma^0_1(p)$ is immediate, so we prove the bound on its $\mu_p$-measure. We have 
		\[
			\E(\Y_n) = p \text{ and }\sigma^2(\Y_n) = \sigma^2/n,
		\]
		where $\sigma^2=p(1-p)\le 1/4$ is the variance of $Y_0$ and $\sigma^2(\Y_n)$ denotes the variance of $\Y_n$. Thus $\sigma\le 1/2$, and
		\[
			\P\left\{|\Y_n-p|\ge k\cdot\sigma(\Y_n)\right\} \le 1/k^2,
		\]
		so 
		\[
			\P\left\{|\Y_n-p|\ge \frac{k}{2\sqrt{n}}\right\} \le
			\P\left\{|\Y_n-p|\ge \frac{k\cdot\sigma}{\sqrt{n}}\right\} \le 1/k^2.
		\]
		Let $b$ be defined by $2^{-(b+1)}=1/k^2$. Now, we claim that $2^{-b}\ge\frac{k}{2\sqrt{n}}$ by taking $n$ large enough as a function of $b$:
		\[
			n\ge k^2 4^{b-1}=2^{b+1}4^{b-1}=2^{3b-1}.
		\]
		Thus, if $n\ge N(b):=2^{3b-1}$, 
		\[
			\P\left\{|\Y_n-p|\ge 2^{-b}\right\} \le 2^{-(b+1)},
		\]
		so
		\[
			\P\left\{|\Y_{N(b)}-p|\ge 2^{-b}\text{ for some }b\ge d\right\} \le \sum_{b\ge d} 2^{-(b+1)}=2^{-d}.
		\]
		\end{proof}

		\subsection*{Brief review of computability-theory} More details can be found in standard textbooks such as Nies \cite{NiesBook} and Soare \cite{Soare} but we provide some information here. Uppercase Greek letters such as $\Phi$ will be used for \emph{Turing functionals}, which are partial functions $B\mapsto \Phi^{B}$ from $2^{\omega}$ to $2^{\omega}$ computed by oracle Turing machines. We write $A=\Phi^{B}$ if $A\in 2^{\omega}$ is computed from the oracle $B\in 2^{\omega}$ using the Turing functional $\Phi$. Then $A(n)=\Phi^{B}(n)$ is the $n^{\text{th}}$ bit of $A$. An oracle Turing machine may, or may not, halt with oracle $B$ on an input $n$; this is written $\Phi^{B}(n)\downarrow$, and $\Phi^{B}(n)\uparrow$, respectively. 

		The \emph{Turing jump} $A'$ is the halting set (the set of solutions to the halting problem) for oracle machines with $A$ on the oracle tape. In terms of Turing reducibility $\le_{T}$, the relativized halting problem remains undecidable in the sense that $A'\not\le_{T}A$ (in fact $A<_{T}A'$). The Turing jump is not injective, even on Turing degrees (equivalence classes of the intersection $\equiv_{T}$ of the relations $\le_{T}$ and $\ge_{T}$). Indeed, there are noncomputable sets $A$ (so $C<_{T}A$ for computable sets $C$) such that $A'\equiv_{T}C'$. It is customary to pick the computable set $\nil$ for the notation here and simply write $A'\le_{T}\nil'$. Such sets $A$ are called \emph{low}.

		The \emph{low basis theorem} of Jockusch and Soare asserts that each nonempty $\Pi^{0}_{1}$ class (meaning, subset of $2^{\omega}$ that is $\Pi^{0}_{1}$ definable in arithmetic) has a low element. There is similarly a \emph{hyperimmune-free basis theorem} which asserts that each nonempty $\Pi^{0}_{1}$ class contains an element $A$ such that each function $f\in\omega^{\omega}$ computable from $A$ is dominated by some computable function in $\omega^{\omega}$. 

		These basis theorems cannot be combined, as on the one hand each low hyperimmune-free set is computable, and on the other hand there are $\Pi^{0}_{1}$ classes without computable elements (such as the set of all completions of Peano Arithmetic, or the set complement of a component $U_{n}$ of a Martin-L\"of randomness test). These facts will be used in Theorem \ref{ss-agree}.

		We are now ready for a result which in a sense sums up the essence of statistics.

		\begin{thm}
		If $Y$ is $\mu_p$-ML-random then $Y$ Turing computes $p$. \label{essence} 
		\end{thm}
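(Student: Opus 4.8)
The plan is to feed $Y$ into the Martin-L\"of test constructed in Proposition~\ref{one} and read off $p$ from the resulting approximations. Since $\{U^p_d\}_d$ is a $\mu_p$-ML-test and $Y$ is $\mu_p$-random, $Y$ must pass it, so there is some $d$ with $Y\notin U^p_d$. Unwinding the definition of $U^p_d$, this says precisely that for every $b\ge d$ we have $|\Y_{N(b)}-p|<2^{-b}$. Thus the sample averages along the sparse sequence $N(b)=2^{3b-1}$ converge to $p$, and from the index $d$ onward they do so with a known, geometric rate.

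First I would observe that each rational $q_b:=\Y_{N(b)}=\frac{1}{N(b)}\sum_{i<N(b)}Y_i$ is computed from the first $N(b)$ bits of $Y$, uniformly in $b$; hence $b\mapsto q_b$ is a single $Y$-computable sequence of rationals. Given the number $d$ above, I would then define a Turing functional that, on input $k$, outputs $q_{d+k}$. Because $d+k\ge d$, the bound from the previous paragraph yields $|q_{d+k}-p|<2^{-(d+k)}\le 2^{-k}$, so this functional produces a fast Cauchy name for $p$ from the oracle $Y$. Therefore $p\le_T Y$.

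The one delicate point is that $d$ depends on $Y$ and cannot be found effectively: locating a correct $d$ would require deciding membership in $U^p_d$, which needs $p$ --- exactly what we are trying to compute. However, $d$ is a single natural number, i.e.\ a finite piece of advice, and establishing $Y\ge_T p$ only requires the existence of \emph{some} functional computing $p$ from $Y$. For each $Y$ we are therefore free to use the functional $\Phi_d$ hard-wired with the correct $d=d(Y)$, chosen from the uniformly computable list $(\Phi_n)_n$ with $\Phi_n^Y(k)=q_{n+k}$. I expect this non-uniformity to be the main conceptual obstacle rather than a technical one: the argument computes $p$ from $Y$ but gives no single procedure uniform in the data, which is consistent with --- and indeed foreshadows --- the later failure of a universal Hippocratic test.
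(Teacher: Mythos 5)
Your proposal is correct and follows the paper's argument: pass $Y$ through the test of Proposition~\ref{one}, fix the (necessarily non-uniform) $d$ with $Y\notin U^p_d$, and read $p$ off from the sample averages $\Y_{N(b)}$. The only step you elide is the passage from ``$Y$ computes a fast Cauchy name for $p$'' to ``$Y$ computes $p$ as an element of $2^\omega$''; this is standard but non-uniform in $p$, and it is exactly where the paper's proof does its remaining work --- it assumes $p$ is not a dyadic rational (harmless, since a dyadic $p$ is computable outright) and then searches for a stage $b$ at which $\Y_{N(b)}$ is far enough from every multiple of $2^{-(n+1)}$ that the approximation pins down the first $n+1$ binary digits of $p$.
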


		\begin{proof}
		We may assume $p$ is not computable, else there is nothing to prove; in particular we may assume $p$ is not a dyadic rational.

		Let $\{U^p_d\}_{d\in\omega}$ be as in Proposition \ref{one}. Since $Y$ is $\mu_p$-random, $Y\not\in\cap_d U^p_d$, so fix $d$ with $Y\not\in U^p_d$. Then for all $b\ge d$, we have
		\begin{equation}\label{star}
			|\Y_{N(b)}-p|< 2^{-b}
		\end{equation}
		where $N(b)=2^{3b-1}$. 

		If the real number $p$ is represented as a member of $2^\omega$ via
		\[
			p = \sum_{n\in\omega} p_n 2^{-n-1} = .p_0 p_1 p_2\cdots
		\]
		in binary notation, then we have to define a Turing functional $\Psi_d$ such that $p_n=\Psi_d^Y(n)$.

		We pick $b\ge \max\{n+1, d\}$, such that $\Y_{N(b)}=.y_0\cdots y_n\cdots$ is not of either of the forms
		\[
			.y_0\,\cdots \,y_{n} \,1^{b-(n+1)} \cdots
		\]
		\[
			.y_0\,\cdots \,y_{n} \,0^{b-(n+1)} \cdots
		\]
		where as usual $1^k$ denotes a string of $k$ ones.
		Since $p$ is not a dyadic rational, such a $b$ exists. Then by (\ref{star}) it must be that the bits $y_0\,\cdots\,y_{n}$ are the first $n+1$ bits of $p$. In particular, $y_n=p_n$. So we let $\Psi_d^Y(n)=y_n$.
		\end{proof}

	\section{Hippocratic results}

		In the last section we made it too easy for ourselves; now we will obtain the same results assuming only Hippocratic randomness. 

		\begin{thm}\label{Hippo}
		There is a Hippocratic $\mu_p$-test such that if $Y$ passes this test then $Y$ computes an accumulation point $q$ of the sequence of sample averages 
		\[
			\{\Y_n\}_{n\in\omega}.
		\] 
		\end{thm}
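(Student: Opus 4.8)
The plan is to imitate Proposition~\ref{one} and Theorem~\ref{essence}, but to replace the $p$-dependent quantity $|\Y_{N(b)}-p|$, which we are no longer permitted to use, by the $p$-free quantity $|\Y_{N(b)}-\Y_{N(b+1)}|$ measuring how much two consecutive sample averages along the subsequence $N(b)=2^{3b-1}$ disagree. Concretely, I would set
\[
	U_d=\bigcup_{b\ge d}\left\{Y:\ |\Y_{N(b)}-\Y_{N(b+1)}|\ge 3\cdot 2^{-(b+1)}\right\}.
\]
Since each $\Y_{N(b)}$ is a rational number computed from the first $N(b)$ bits of $Y$, and the thresholds $3\cdot 2^{-(b+1)}$ are rationals not mentioning $p$, each set in the union is clopen and decidable uniformly in $b$ and $Y$; hence $\{U_d\}_d$ is uniformly $\Sigma^0_1$, that is, Hippocratic.

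To see that $\{U_d\}_d$ is a test I would bound its measure by reducing to Proposition~\ref{one} through the triangle inequality. If $|\Y_{N(b)}-\Y_{N(b+1)}|\ge 2^{-b}+2^{-(b+1)}=3\cdot 2^{-(b+1)}$, then, since
\[
	|\Y_{N(b)}-\Y_{N(b+1)}|\le |\Y_{N(b)}-p|+|\Y_{N(b+1)}-p|,
\]
at least one of $|\Y_{N(b)}-p|\ge 2^{-b}$ or $|\Y_{N(b+1)}-p|\ge 2^{-(b+1)}$ must hold. By the estimate established inside the proof of Proposition~\ref{one}, namely $\mu_p\{|\Y_{N(b)}-p|\ge 2^{-b}\}\le 2^{-(b+1)}$, each of these has small $\mu_p$-measure, so
\[
	\mu_p\left\{|\Y_{N(b)}-\Y_{N(b+1)}|\ge 3\cdot 2^{-(b+1)}\right\}\le 2^{-(b+1)}+2^{-(b+2)}=3\cdot 2^{-(b+2)},
\]
and summing over $b\ge d$ gives $\mu_p(U_d)\le 3\cdot 2^{-(d+1)}<2^{-d+1}$, which tends to $0$ effectively as required.

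Finally I would extract the accumulation point. If $Y$ passes the test then $Y\notin U_d$ for some $d$, so $|\Y_{N(b)}-\Y_{N(b+1)}|<3\cdot 2^{-(b+1)}$ for every $b\ge d$; telescoping shows $\{\Y_{N(b)}\}_{b\ge d}$ is Cauchy with the explicit rate $|\Y_{N(b)}-\Y_{N(b')}|<3\cdot 2^{-b}$ for $b'>b\ge d$, so it converges to some real $q$ with $|\Y_{N(b)}-q|\le 3\cdot 2^{-b}$. To compute $q$ from $Y$ I would, on precision input $m$, output $\Y_{N(\max\{d,\,m+2\})}$, which lies within $3\cdot 2^{-(m+2)}<2^{-m}$ of $q$ and thus yields a fast Cauchy name for $q$ computed from $Y$. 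Because $q=\lim_b\Y_{N(b)}$ is approached along the index subsequence $\{N(b)\}$, it is an accumulation point of $\{\Y_n\}_{n\in\omega}$, as desired.

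I expect no serious computational obstacle: once the test is framed in terms of consecutive disagreements, the measure bound is immediate from Proposition~\ref{one} and the triangle inequality, and the extraction of $q$ mirrors Theorem~\ref{essence}. The one genuinely delicate point is that the test certifies convergence only of the \emph{tail} beyond the unknown threshold $d$, so---exactly as with $\Psi_d$ in Theorem~\ref{essence}---the functional computing $q$ must be allowed to depend non-uniformly on $d$. This is legitimate because such a $d$ exists for the given $Y$ and each fixed value of $d$ determines an honest Turing functional.
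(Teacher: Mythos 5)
Your proof is correct and is essentially the paper's own argument: the paper's test $V_d=\{Y:\exists a,b\ge d\;|\Y_{N(a)}-\Y_{N(b)}|\ge 2^{-a}+2^{-b}\}$ uses all pairs of indices and bounds its measure by the containment $V_d\subseteq U^p_d$ for every $p$, whereas you use only consecutive pairs and a union bound, but both versions rest on the same key idea of replacing the forbidden quantity $|\Y_{N(b)}-p|$ by $p$-free differences of sample averages and both extract $q$ as the limit of the resulting Cauchy subsequence. The only cosmetic difference is at the end, where you output a fast Cauchy name for $q$ rather than its binary expansion via the functional $\Theta_d$; either yields $q\le_T Y$.
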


		\begin{proof}
		The point is that the usual proof that each convergent sequence is Cauchy gives a $\Sigma^0_1$ class that has small $\mu_p$-measure for all $p$ simultaneously. Namely, let
		\[
			V_d:=\{Y: \exists a,b\ge d\,\,|\Y_{N(a)}-\Y_{N(b)}|\ge 2^{-a}+ 2^{-b}\},
		\]
		where $N(b)=2^{3b-1}$. Then $\{V_d\}_{d\in\omega}$ is uniformly $\Sigma^0_1$. Recall from Proposition \ref{one} that we defined
		\[
			U^{p}_d=\{Y:\exists b\ge d\,\, |\Y_{N(b)}-p|\ge 2^{-b}\}.
		\]
		If there is a $p$ such that $|\Y_{N(b)}-p|<2^{-b}$ for all $b\ge d$, then 
		\[
			|\Y_{N(a)}-\Y_{N(b)}|\le |\Y_{N(a)}-p|+|p-\Y_{N(b)}|< 2^{-a}+2^{-b}
		\]
		for all $a,b\ge d$; thus we have 
		\[
			V_d\subseteq \cap_{p} U^{p}_d
		\]
		and therefore 
		\[
			\mu_p(V_d)\le\mu_p(U^{p}_d)\le 2^{-d}
		\]
		for all $p$. Thus if $Y$ is Hippocrates $\mu_p$-random then $Y\not\in V_d$ for some $d$. We next note that for any numbers $c> b$, 
		\[
			|\Y_{N(b)} - \Y_{N(c)}| < 2^{-b}+2^{-c} < 2^{-(b-1)},
		\]
		so $\{\Y_{N(c)}\}_{c\ge d}$ will remain within $2^{-(b-1)}$ of $\Y_{N(b)}$ for all $c>b$. Thus $\{\Y_{N(n)}\}_{n\ge d}$ is a Cauchy sequence and $q:=\lim_n \Y_{N(n)}$ exists. Write $q=.q_0q_1q_2\cdots$. Then
		\[
		\begin{aligned}
			|\Y_{N(b)}-q| &<  2^{-(b-1)}, \text{ so }\\
			|\Y_{N(b+1)}-q| &<  2^{-b};
		\end{aligned}
		\]
		if we define $\Theta_d$ as $\Psi_d$ in the proof of Theorem \ref{essence} except with $N(\cdot)$ replaced by $N(\cdot+1)$, then
		\[
			q_n = \Theta_d^Y(n),
		\] 
		and so $Y$ computes $q$ using the Turing reduction $\Theta_d$. 
		\end{proof}

		To argue that the accumulation point $q$ of Theorem \ref{Hippo} is actually equal to $p$ under the weak assumption of Hippocratic randomness, we need: 
		\subsection*{An analysis of the strong law of large numbers.} 

		We say that the strong law of large numbers (SLLN) is satisfied by $X\in 2^{\omega}$ for the parameter $p\in [0,1]$ if $\lim_{n\to\infty}\overline X_{n}/n= p$. The strong law of large numbers is then the statement that the set of those $X$ that do not satisfy the SLLN for $p$ has $\mu_{p}$-measure zero. 

		Let $\{X_n\}_{n\in\omega}$ be independent and identically distributed random variables with mean 0, and let $S_n=\sum_{i=0}^n X_i$. Then $S_n^4$ will be a linear combination (with binomial coefficients as coefficients) of the terms
		 \[
			\sum_i X_i^4,					\quad
			\sum_{i<j} X^3_iX_j,				\quad
			\sum_{i<j<k} X_i^2 X_j X_k,		\quad
			\sum_{i<j<k<\ell} X_i X_j X_k X_\ell,	\quad 
			\text{and}						\quad 
			\sum_{i<j} X_i^2 X_j^2.
		\]
		Since $\E(X_i)=0$, and $\E(X_i^aX_j^b)=\E(X_i^a)\E(X_j^b)$ by independence, and each $X_i$ is identically distributed with $X_1$ and $X_2$, we get 
		\[
			\E(S_n^4) = n \,\E(X_1^4) + {n\choose 2}{4\choose 2} \E(X_1^2X_2^2)
			= n \,\E(X_1^4) + 3n(n-1) \E(X_1^2)^2.
		\]
		Since $0\le\sigma^2(X_1^2) = \E(X_1^4)-\E(X_1^2)^2$, this is (writing $K:=\E(X_1^4)$)
		\[
			\le (n+3n(n-1)) \E(X_1^4) = (3n^2-2n) K,
		\]
		so $\E(S_n^4/n^4) \le \frac{3K}{n^2}$.
		Now for any $a\in\R$,
		\[
			S_n^4/n^4 \ge a^4 \cdot \mathbf 1_{\{S_n^4/n^4\ge a^4\}}
		\]
		surely, so (as in the proof of Chebyshev's inequality)
		\[
			\E(S_n^4/n^4) \ge a^4 \cdot \E(\mathbf 1_{\{S_n^4/n^4\ge a^4\}}) = a^4\cdot \P( S_n^4/n^4\ge a^4),
		\]
		giving
		\[
			\P(\overline X_n = S_n/n \ge a) \le \frac{3K}{n^2 a^{4}}.
		\]
		We now apply this to $X_n=Y_n-\E(Y_n) = Y_n - p$ (so that $K_p:=K$ depends on $p$). Note that (writing $\overline p=1-p$)
		\[
			K_p=\E[(Y_1-p)^4] = \overline p^4\cdot p + p^4\cdot \overline p = p\overline p ({\overline p}^3+p^3) \le \frac{1}{4}\cdot 2 = \frac{1}{2},
		\]
		so $\P(\exists n\ge N\,\, |\Y_n-p|\ge a)$ is bounded by 
		\[
			\sum_{n\ge N} \frac{3K_p}{n^2 a^4} \le \frac{3}{2a^4} \sum_{n\ge N} \frac{1}{n^2}
			\le \frac{3}{2a^4} \int_{N-1}^\infty \frac{dx}{x^2} = \frac{3}{2a^4(N-1)}.
		\]
		This bound suffices to obtain our desired result:

		\begin{thm}\label{harder}
		If $Y$ is Hippocrates $\mu_p$-random then $Y$ satisfies the strong law of large numbers for $p$.
		\end{thm}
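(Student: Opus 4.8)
The plan is to sandwich the limit of the sample averages against $p$ from both sides, using a \emph{separate} Hippocratic test for each rational threshold rather than a single test that forces convergence to $p$ outright. This matching of a family of tests to the parameter is forced on us: a single Hippocratic test cannot pin the limit to $p$ (this is the non-existence phenomenon announced in the abstract), so an infinite family is essential. The crucial input is the tail bound just established,
\[
	\P\left(\exists n\ge N\,\,|\Y_n-p|\ge a\right)\le \frac{3}{2a^4(N-1)},
\]
whose right-hand side does not depend on $p$.

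First I would fix a rational $s<p$ and consider the one-sided, $p$-free open class
\[
	A^s_d=\{Y:\exists n\ge N(d)\,\,\Y_n\le s\}.
\]
Since $\Y_n\le s$ forces $|\Y_n-p|\ge p-s$, the displayed bound gives $\mu_p(A^s_d)\le \frac{3}{2(p-s)^4(N(d)-1)}$. To turn this into a legitimate Hippocratic test I would use knowledge of $p$ only to pick a fixed rational $\eps$ with $0<\eps<p-s$; then $\frac{3}{2\eps^4(N-1)}$ is a $p$-free computable bound, so taking $N(d)$ to be a $p$-free computable function growing fast enough yields $\mu_p(A^s_d)\le 2^{-d}$ while keeping each $A^s_d$ a uniformly $\Sigma^0_1$, $p$-free class. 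This is exactly the ``agent 1 tells agent 2 which test to apply'' mechanism: $p$ is consulted only to select the threshold $s$ and the rational $\eps$, and never appears inside the test itself. Since $Y$ is Hippocrates $\mu_p$-random it passes $\{A^s_d\}_d$, so $Y\notin A^s_d$ for some $d$, i.e. $\Y_n>s$ for all $n\ge N(d)$; hence $\liminf_n\Y_n\ge s$. As this holds for every rational $s<p$, we obtain $\liminf_n\Y_n\ge p$. Running the symmetric argument with the classes $B^t_d=\{Y:\exists n\ge N'(d)\,\,\Y_n\ge t\}$ for each rational $t>p$ gives $\limsup_n\Y_n\le p$, and the two inequalities force $\lim_n\Y_n=p$, which is the strong law of large numbers for $p$.

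The main obstacle, and the only genuine subtlety, is the step in the middle: making each test honestly Hippocratic. One cannot merely write $\mu_p(A^s_d)\le \frac{3}{2(p-s)^4(N-1)}$ and then choose $N(d)$ as a function of $p-s$, because the sequence must be enumerable effectively \emph{without} $p$. The resolution is that, for a fixed $s$ and the fixed $p$ under consideration, $p-s$ is a fixed positive constant, so replacing it by a rational lower bound $\eps$ converts the measure estimate into a $p$-free computable one, with only finitely much information about $p$ (the choices of $s$ and $\eps$) entering. I would also remark that the $\liminf/\limsup$ sandwiching makes a separate Cauchy-type convergence argument unnecessary, although one could alternatively first invoke convergence of $\{\Y_n\}$ and then identify the limit as $p$ via the same threshold tests.
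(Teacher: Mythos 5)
Your proposal is correct and follows essentially the same route as the paper: the paper fixes rationals $q_1<p<q_2$ and uses the two-sided $p$-free test $W_N=\{Y:\exists n\ge N\,\Y_n\le q_1\}\cup\{Y:\exists n\ge N\,\Y_n\ge q_2\}$ with the same fourth-moment tail bound, concluding that $\Y_n$ is eventually in $(q_1,q_2)$ for every such pair; your splitting into two one-sided families and the $\liminf$/$\limsup$ sandwich is only a cosmetic repackaging of this. Your explicit replacement of $p-s$ by a rational lower bound $\eps$ to make the measure bound computable is a welcome extra precision where the paper just writes ``constant $\cdot\frac{1}{N-1}$''.
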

		\begin{proof}
		Let $q_1$, $q_2$ be rational numbers with $q_1<p<q_2$.
		Let
		\[
			W_N:=\{Y: \exists n\ge N\,\,\Y_n\le q_1\}\cup\{Y: \exists n\ge N\,\,\Y_n\ge q_2\}.
		\]
		Then $\{W_N\}_{N\in\omega}$ is uniformly $\Sigma^0_1$, and $\mu_p(W_N)\to 0$ effectively:
		\[
			\mu_p(W_N)\le \frac{3}{2(p-q_1)^4(N-1)}+\frac{3}{2(p-q_2)^4(N-1)}=\text{constant}\cdot\frac{1}{N-1}.
		\]

		Thus if $Y$ is Hippocrates $\mu_p$-random then $Y\not\in\cap_n W_n$, i.e., $\Y_n$ is eventually always in the interval $(q_1,q_2)$. 
		\end{proof}

		\begin{cor}\label{assume-only}
		If $Y$ is Hippocrates $\mu_p$-random then $Y$ Turing computes $p$.
		\end{cor}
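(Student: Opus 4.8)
The plan is to combine Theorems \ref{Hippo} and \ref{harder}, which together will force the computable accumulation point produced in Theorem \ref{Hippo} to equal $p$ exactly. Neither theorem alone suffices: Theorem \ref{Hippo} hands us a limit we can compute but tells us nothing about its numerical value, while Theorem \ref{harder} controls the value but says nothing about computability. The corollary is where the two halves meet.

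First I would invoke Theorem \ref{Hippo}. Since $Y$ is Hippocrates $\mu_p$-random it passes every Hippocratic test, and in particular it passes the test $\{V_d\}$ built there. Hence $Y$ computes, via the Turing functional $\Theta_d$, the limit $q=\lim_n \Y_{N(n)}$ along $N(n)=2^{3n-1}$, and $q$ is an accumulation point of the sample averages $\{\Y_n\}_n$. Second, I would apply Theorem \ref{harder}, again using that $Y$ is Hippocrates $\mu_p$-random: this gives that $Y$ satisfies the strong law of large numbers for $p$. Because the conclusion of Theorem \ref{harder} is that for every pair of rationals $q_1<p<q_2$ the averages $\Y_n$ are eventually confined to $(q_1,q_2)$, the full sequence $\{\Y_n\}_n$ converges to $p$.

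The third and final step is the elementary observation that if the full sequence $\{\Y_n\}_n$ converges to $p$, then every subsequence converges to the same limit. In particular the subsequence $\{\Y_{N(n)}\}_n$ converges to $p$, so its limit $q$ satisfies $q=p$. Consequently the functional $\Theta_d$ of Theorem \ref{Hippo} already computes $p$ from $Y$, giving $p=\Theta_d^Y$, which is exactly the desired conclusion.

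I expect no genuine obstacle in the corollary itself; all the real work has been discharged in the two preceding theorems, namely the construction of a single $\Sigma^0_1$ Cauchy-type test $\{V_d\}$ whose $\mu_p$-measure is uniformly small for all $p$, and the fourth-moment estimate yielding the effective bound on $\mu_p(W_N)$ that drives the SLLN. The one point worth stating explicitly is the conceptual pairing described above: Theorem \ref{Hippo} supplies \emph{computability} of an accumulation point and Theorem \ref{harder} supplies its \emph{identification} as $p$, and it is only their conjunction that yields $Y\ge_T p$ from the weak hypothesis of Hippocratic randomness.
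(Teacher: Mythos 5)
Your proposal is correct and follows exactly the paper's own argument: apply Theorem \ref{Hippo} to get a computable limit $q$ of the subsequence $\{\Y_{N(b)}\}_b$, then use Theorem \ref{harder} to conclude that the full sequence of averages converges to $p$, forcing $q=p$. The paper's proof is a two-sentence version of the same reasoning, so there is nothing to add.
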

		\begin{proof}
		By Theorem \ref{Hippo}, $Y$ computes the limit of a subsequence $\{\Y_{N(b)}\}_{b\in\omega}$. By Theorem \ref{harder}, this limit must be $p$.
		\end{proof}

		Note that the randomness test in the proof of Theorem \ref{harder} depends on the pair $(q_1,q_2)$, so we actually needed infinitely many tests to guarantee that $Y$ computes $p$. This is no coincidence. Let $Y\ge_T p$ abbreviate the statement that $Y$ Turing computes $p$, i.e., $p$ is Turing reducible to $Y$.
        
		\begin{thm}\label{ss-agree}
		For all $p$, if there is a Hippocratic $\mu_p$-test $\{U_n\}_{n\in\omega}$ such that 
		\[
			\{X:X\not\ge_T p\}\subseteq\cap_n U_n,
		\]
		then $p$ is computable.
		\end{thm}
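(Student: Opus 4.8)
The plan is to prove the contrapositive: assuming $p$ is noncomputable, I will produce from any candidate Hippocratic $\mu_p$-test $\{U_n\}_n$ an $X$ that passes the test yet does not compute $p$, contradicting the containment $\{X : X\not\ge_T p\}\subseteq\cap_n U_n$. The cases $p\in\{0,1\}$ give computable $p$ outright, so I may assume $0<p<1$. The first step is to extract a single nonempty $\Pi^0_1$ class from the test. Because the test is Hippocratic, each $U_n$ is (unrelativized) $\Sigma^0_1$, so $P:=2^\omega\setminus U_1$ is an (unrelativized) $\Pi^0_1$ class; and since $\mu_p(U_1)\le 1/2$ we have $\mu_p(P)\ge 1/2>0$, so $P$ is nonempty. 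The key reduction is that every member of $P$ computes $p$: if $X\in P$ then $X\notin U_1\supseteq\cap_n U_n$, so $X$ passes the test, whence the hypothesized containment in contrapositive form forces $X\ge_T p$.

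The heart of the argument is then to apply both basis theorems recalled above to this one class $P$. The low basis theorem yields a low member $X_0\in P$, and the hyperimmune-free basis theorem yields a hyperimmune-free member $X_1\in P$; both compute the fixed real $p$. From $p\le_T X_0$ with $X_0$ low, and monotonicity of the jump, I get $p'\le_T X_0'\le_T\nil'$, so $p$ is itself low. From $p\le_T X_1$ with $X_1$ of hyperimmune-free degree, together with the downward closure of the hyperimmune-free property under $\le_T$, I get that $p$ also has hyperimmune-free degree.

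Combining these, $p$ is simultaneously low and of hyperimmune-free degree, so $p$ is computable (as recalled above, every low hyperimmune-free set is computable), which is the desired contradiction. The subtlety I expect to matter most is exactly that the two basis theorems cannot be applied to a single witness at once --- low plus hyperimmune-free would already force that witness to be computable --- but this is not needed here: since $p$ is a single fixed object computed by every element of $P$, the lowness inherited from $X_0$ and the hyperimmune-freeness inherited from $X_1$ both attach to the same $p$, and it is $p$, rather than either $X_i$, that gets pinned down as computable. The remaining ingredients --- the two downward-closure facts and the nonemptiness of $P$ via the measure bound (the only place $\mu_p(U_n)\le 2^{-n}$ is used) --- are routine.
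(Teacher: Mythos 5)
Your proof is correct and follows essentially the same route as the paper: pass to the $\Pi^0_1$ class $2^\omega\setminus U_1$, apply the low and hyperimmune-free basis theorems to get two members, note that both must compute $p$, and conclude that $p$ is both low and of hyperimmune-free degree, hence computable. You supply slightly more detail than the paper (the nonemptiness of the class via the measure bound, and the downward closure of the two properties under $\le_T$), but the argument is the same.
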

		\begin{proof}
		Let $\{U_n\}_{n\in\omega}$ be such a test. By standard computability theoretic basis theorems, the complement $2^\omega\setminus U_1$ has a low member $X_1$ and a hyperimmune-free member $X_2$. By assumption $X_1\ge_T p$ and $X_2\ge_T p$, so $p$ is both low and hyperimmune-free, hence by another basic result of computability theory \cite{Soare}, $p$ is computable.
		\end{proof}

		\begin{cor}
		There is no universal Hippocratic $\mu_p$-test, unless $p$ is computable. 
		\end{cor}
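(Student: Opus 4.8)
The plan is to derive this immediately from Corollary \ref{assume-only} and Theorem \ref{ss-agree}, since a universal test is precisely a single test that packages the failing behavior of all tests at once. First I would unwind the definition of universality: a universal Hippocratic $\mu_p$-test $\{U_n\}_{n\in\omega}$ is one for which $\cap_n V_n\subseteq \cap_n U_n$ for \emph{every} Hippocratic $\mu_p$-test $\{V_n\}_n$, so that $\cap_n U_n$ is exactly the set of sequences failing some Hippocratic test, i.e.\ $X$ passes $\{U_n\}_n$ if and only if $X$ is Hippocrates $\mu_p$-random. In particular $\cap_n U_n$ contains every sequence that is not Hippocrates $\mu_p$-random.

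The key step is then to read Corollary \ref{assume-only} contrapositively: if $X\not\ge_T p$ then $X$ cannot be Hippocrates $\mu_p$-random, and hence $X\in\cap_n U_n$. This yields precisely the inclusion
\[
	\{X : X\not\ge_T p\}\subseteq \cap_n U_n
\]
for the hypothesized universal test $\{U_n\}_n$. But this is exactly the hypothesis of Theorem \ref{ss-agree}, so that theorem applies verbatim and forces $p$ to be computable. Chaining the two implications, the existence of a universal Hippocratic $\mu_p$-test forces $p$ to be computable, which is the contrapositive form of the claim.

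I do not expect a genuine obstacle, as the corollary is a formal consequence of the two preceding results; the only point requiring care is the definition of universality, namely that the intersection associated to a universal test contains the intersection of every other test and therefore absorbs all non-random sequences. If one wished to avoid invoking a blanket definition of universality, the alternative would be to argue directly that from a purported universal test one can, within the relevant measure bounds, recover each of the individual tests $W_N$ built in the proof of Theorem \ref{harder} (the ones witnessing $Y\ge_T p$), and thereby re-derive the inclusion above; but appealing to the standard characterization of universal tests is cleaner and entirely sufficient here.
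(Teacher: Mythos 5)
Your proof is correct and follows essentially the same route as the paper's: the universal test absorbs all non-Hippocrates-random sequences, Corollary \ref{assume-only} (contrapositively) shows every $X\not\ge_T p$ is among these, and Theorem \ref{ss-agree} then forces $p$ to be computable. You have merely spelled out the unwinding of universality that the paper's one-line proof leaves implicit.
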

		\begin{proof}
		If there is such a test then by Corollary \ref{assume-only} there is a test $\{U_n\}_{n\in\omega}$ as in the hypothesis of Theorem \ref{ss-agree}, whence $p$ is computable.
		\end{proof}

		\paragraph{Comparison with earlier work.}

		Martin-L\"of \cite{ML} states a result that in our terminology reads as follows.
		\begin{thm}\label{choose}
		There is a test that is a Hippocratic ML-test simultaneously for all $\mu_p$, the passing of which implies that the Strong Law of Large Numbers is satisfied for some $p$. 
		\end{thm}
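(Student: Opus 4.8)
The plan is to imitate the Cauchy-criterion construction of Theorem \ref{Hippo}, but applied to the \emph{full} sequence of sample averages $\{\Y_n\}$ rather than to a subsequence, and to do so at \emph{every} scale simultaneously, so that escaping a single level of the test already forces genuine convergence. The only distributional input I need is the uniform fourth-moment estimate of the previous subsection: there is an absolute constant $C$ with
\[
	\mu_p\{Y:\exists n\ge M\,\,|\Y_n-p|\ge a\}\le \frac{C}{a^4(M-1)}
\]
for all $p$, which holds precisely because $K_p=\E[(Y_1-p)^4]\le 1/2$ independently of $p$.

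First I would replace the comparison ``$|\Y_n-p|\ge a$'', which mentions the unknown $p$, by the $p$-free comparison ``$|\Y_m-\Y_n|\ge 2^{-k}$''. If $|\Y_m-\Y_n|\ge 2^{-k}$ then by the triangle inequality at least one of $\Y_m,\Y_n$ differs from $p$ by at least $2^{-(k+1)}$, so
\[
	\{Y:\exists m,n\ge M\,\,|\Y_m-\Y_n|\ge 2^{-k}\}\subseteq\{Y:\exists n\ge M\,\,|\Y_n-p|\ge 2^{-(k+1)}\},
\]
whence, applying the estimate above with $a=2^{-(k+1)}$, for \emph{every} $p$,
\[
	\mu_p\{Y:\exists m,n\ge M\,\,|\Y_m-\Y_n|\ge 2^{-k}\}\le \frac{C\cdot 2^{4(k+1)}}{M-1}.
\]
The set on the left is a uniformly computable union of clopen cylinders and makes no reference to $p$, so it is a genuinely Hippocratic $\Sigma^0_1$ object.

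Next I would pick a computable function $h(k,N)$ large enough that the last bound with $M=h(k,N)$ is at most $2^{-N-k-1}$, and define
\[
	Z_N:=\bigcup_{k\in\omega}\{Y:\exists m,n\ge h(k,N)\,\,|\Y_m-\Y_n|\ge 2^{-k}\}.
\]
Then $\{Z_N\}_{N\in\omega}$ is uniformly $\Sigma^0_1$ and independent of $p$, and for every $p$ we get $\mu_p(Z_N)\le\sum_{k}2^{-N-k-1}=2^{-N}$, so $\{Z_N\}_N$ is a Hippocratic ML-test simultaneously for all $\mu_p$. To finish, I would check that passing it forces convergence: if $Y\notin Z_N$ for some $N$ (as passing requires, since $Y\notin\cap_N Z_N$), then for \emph{every} scale $k$ we have $|\Y_m-\Y_n|<2^{-k}$ for all $m,n\ge h(k,N)$, which is exactly the Cauchy criterion; hence $p:=\lim_n\Y_n$ exists in $[0,1]$ and the SLLN is satisfied for this $p$.

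The main obstacle is the one already signaled by Theorem \ref{harder}: the natural tail estimate is phrased in terms of the unknown $p$, so naively the test is not Hippocratic. This is overcome by the two devices above — comparing the sample averages to one another rather than to $p$, and exploiting that the fourth-moment constant $K_p$ is bounded by $1/2$ uniformly in $p$. The only remaining delicate point is that escaping a \emph{single} level $Z_N$ must yield the full Cauchy property rather than mere confinement of the tail at one fixed scale; this is precisely why each $Z_N$ is taken to be the union over \emph{all} scales $k$, with thresholds $h(k,N)$ tuned so that the geometric series over $k$ still keeps $\mu_p(Z_N)\le 2^{-N}$.
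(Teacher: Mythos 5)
Your construction is correct. The triangle-inequality reduction from $|\Y_m-\Y_n|\ge 2^{-k}$ to $|\Y_n-p|\ge 2^{-(k+1)}$ is valid, the resulting bound $\mu_p(Z_N)\le\sum_k 2^{-N-k-1}=2^{-N}$ holds uniformly in $p$ because $K_p\le 1/2$ uniformly, the sets $Z_N$ are uniformly $\Sigma^0_1$ with no reference to $p$ (each comparison $|\Y_m-\Y_n|\ge 2^{-k}$ is decidable from finitely many bits of $Y$), and escaping a single $Z_N$ does give the full Cauchy property at every scale, hence convergence of $\Y_n$ to some $p$, which is exactly the SLLN for that $p$.

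Note, however, that the paper does not prove this theorem at all: it is attributed to Martin-L\"of \cite{ML} and stated without proof. Martin-L\"of's original Bernoulli test is built by a quite different, combinatorial route (counting strings with a fixed number of ones, exploiting exchangeability), and Levin's strengthening even characterizes the passing reals as those that are $\mu_p$-random for some $p$. Your argument is instead a self-contained analytic proof assembled from the paper's own ingredients: it takes the Cauchy-criterion device of Theorem \ref{Hippo} (there applied only along the sparse subsequence $N(b)=2^{3b-1}$, yielding only an accumulation point) and upgrades it to the full sequence at all scales simultaneously, using the uniform fourth-moment tail bound $3/(2a^4(N-1))$ in place of Chebyshev. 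What you lose relative to Martin-L\"of/Levin is the converse direction (your test may reject some reals that are $\mu_p$-random for some $p$ is not an issue here, but your test is not claimed to capture exactly the Bernoulli-random reals); what you gain is a short, elementary proof that fits the paper's framework and makes explicit why no access to $p$ is needed: the only distributional input is a moment bound that is uniform in $p$.
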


		Levin \cite{Levin} in fact states that a real $x$ passes Martin-L\"of's test from Theorem \ref{choose} if and only if $x$ is (non-Hippocratically) $\mu_p$-ML-random for some $p\in [0,1]$. This he obtains as a corollary of the following more general result: For any $\Pi^0_1$ class $\mc C$ of measures (such as the class of Bernoulli measures), there is a uniformly $\Sigma^0_1$ test $\{U_n\}_{n\in\omega}$, with $\mu(U_n)\le 2^{-n}$ for each $n$ and each $\mu\in\mc C$, such that a real $x\not\in\cap_{n}U_{n}$ iff $x$ is $\mu$-ML-random with respect to some measure $\mu\in\mc C$.

		Thus we see by Theorem \ref{choose} that we did not need infinitely many tests in the above proof of Theorem \ref{harder}, in order to get the Strong Law of Large Numbers to be satisfied for \emph{some} $p$; only to get it to be the \emph{correct} $p$.
    
	\section{Coincidence of randomness notions}
 
		We now show that Hippocratic randomness is the same as ordinary randomness for Bernoulli measures. The main idea is that since each random sequence computes $p$, it should be possible to turn a $\Sigma^0_1(p)$ test into a $\Sigma^0_1$ test.

		\begin{df}
		Let $\{O^p_n\}_{n\in\omega}$ be a universal $\mu_p$-test for all $p$, i.e. $\mu_p(O^p_n)\le 2^{-n}$ for all $p$ and $\{(p,X,n): X\in O_n^p\}$ is $\Sigma^0_1$, and if $\{\widehat O_n\}_{n\in\omega}$ is any other such test then $\cap_n\widehat O_n \subseteq \cap_n O_n$.
		\end{df}

		The existence of such a test follows from a relativization of the usual argument that there is a universal Martin-L\"of test.

		\begin{thm}\label{coincide}
		If $Y$ is Hippocrates $\mu_p$-random then $Y$ is $\mu_p$-random.
		\end{thm}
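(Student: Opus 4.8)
The plan is to argue by contradiction. Suppose $Y$ is Hippocrates $\mu_p$-random but \emph{not} $\mu_p$-random, so that $Y\in\cap_m O^p_m$ for the universal $\mu_p$-test $\{O^p_m\}_m$. From $\{O^p_m\}_m$ I will manufacture a single Hippocratic test $\{G_n\}_n$ that still captures $Y$, contradicting Hippocratic randomness. The guiding idea, as advertised before the theorem, is to feed the universal test not the oracle $p$ but the approximation $\Theta_n^X$ that the reduction of Theorem \ref{Hippo} computes from the data $X$ itself. Concretely I would set
\[
	G_n := V_n \cup \{X : X\in O^{\Theta_n^X}_n\},
\]
where $V_n$ and the Turing functionals $\Theta_n$ are exactly those from the proof of Theorem \ref{Hippo}, and where ``$X\in O^{\Theta_n^X}_n$'' is read as: some finite prefix of $X$ forces a prefix $\tau$ of $\Theta_n^X$ to converge, and the enumeration of $O_n$ with oracle $\tau$ already captures $X$. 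Crucially, I use a \emph{single} index $n$ for both the level of $V$ and the reduction $\Theta_n$, so that no divergent sum $\sum_d 2^{-d}$ can appear.

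First I would check that $\{G_n\}_n$ is uniformly $\Sigma^0_1$. Since $\{(q,X,m):X\in O_m^q\}$ is $\Sigma^0_1$, membership is witnessed by a finite prefix $\rho\prec X$ together with a finite oracle prefix $\tau$; substituting $q=\Theta_n^X$ turns the clause $\tau\prec\Theta_n^X$ into the $\Sigma^0_1(X)$ event that $\Theta_n^X\!\restriction\!|\tau|=\tau$, so $\{X:X\in O^{\Theta_n^X}_n\}$ is uniformly $\Sigma^0_1$, and $G_n$ is then a union of two uniformly $\Sigma^0_1$ classes. Next I would bound the measure. Writing $A_n=\{X:X\in O^{\Theta_n^X}_n\}$, decompose $G_n=V_n\sqcup(A_n\setminus V_n)$. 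On the complement of $V_n$ the sequence $\{\overline X_{N(c)}\}_c$ is Cauchy and $\Theta_n^X$ converges to its accumulation point, which by the strong law of large numbers equals $p$ for $\mu_p$-almost every $X$; hence $A_n\setminus V_n\subseteq O^p_n$ up to a $\mu_p$-null set, giving $\mu_p(A_n\setminus V_n)\le 2^{-n}$. Together with $\mu_p(V_n)\le 2^{-n}$ this yields the effective bound $\mu_p(G_n)\le 2^{1-n}$, which is admissible.

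Finally I would verify $Y\in\cap_n G_n$. Using that the classes $V_d$ are nested decreasing in $d$ and that $Y$, being Hippocrates $\mu_p$-random, lies outside some $V_d$, let $d_0$ be least with $Y\notin V_{d_0}$. For $n<d_0$, minimality gives $Y\in V_n\subseteq G_n$. For $n\ge d_0$, nesting gives $Y\notin V_n$, so $\Theta_n^Y$ equals the accumulation point of $\{\overline Y_{N(c)}\}_c$, which is $p$ because $Y$ satisfies the strong law of large numbers by Theorem \ref{harder}; since $Y\in\cap_m O^p_m$ we get $Y\in O^p_n$, hence $Y\in A_n\subseteq G_n$. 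Thus $Y\in\cap_n G_n$, contradicting Hippocratic $\mu_p$-randomness, and therefore $Y$ must be $\mu_p$-random.

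I expect the main obstacle to be precisely the tension between trustworthiness and effectivity: the reduction $\Theta_n^X$ is only reliable (and the measure only small) on the $\Pi^0_1$ set $\{X\notin V_n\}$, yet a randomness test must be $\Sigma^0_1$. The resolution is to \emph{absorb} this $\Pi^0_1$ gate into $V_n$ by taking the union: the identity $V_n\cup(A_n\setminus V_n)=V_n\cup A_n$ exhibits $G_n$ as genuinely $\Sigma^0_1$, while the disjoint decomposition $V_n\sqcup(A_n\setminus V_n)$ keeps the measure controlled, the unreliable behaviour of $\Theta_n$ inside $V_n$ being harmlessly charged to $\mu_p(V_n)\le 2^{-n}$. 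Verifying that this single-index construction simultaneously captures $Y$ at \emph{every} level $n$ (below $d_0$ via $V_n$, at or above $d_0$ via the universal test) is the step requiring the most care.
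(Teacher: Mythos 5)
Your proof is correct and follows essentially the same route as the paper's: both feed the data-computed approximation $\Theta_n^X$ of $p$ into the universal test $O_n$, diagonalize on the single index $n$, and charge the unreliability of $\Theta_n$ (together with the failure of the SLLN) to the small sets $V_n$ and a $\mu_p$-null set. The only, harmless, difference is that you union $V_n$ into the test component so that $Y$ is captured at every level, whereas the paper's $W_n=D^{(n)}_n$ captures $Y$ only for $n\ge d$, which suffices after a shift of indices.
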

		\begin{proof}
		Let $\mc H_p$ be the set of all Hippocrates $\mu_p$-random reals and $2^\omega\setminus \mc H_p$ its complement. As in the proof of Theorem \ref{Hippo}, let
		\[
			V_d=\{Y: \exists a,b\ge d\,\,|\Y_{N(a)}-\Y_{N(b)}|\ge 2^{-a}+ 2^{-b}\}
		\]
		where $N(b)=2^{3b-1}$. Let $\Theta_d$ denote the reduction from the proof of Theorem \ref{Hippo} under the assumption $Y\not\in V_d$ there.

		We have
		\[
			\{Y: Y\not\in V_d\}\subseteq \{Y: \Theta_d^Y \text{ is total }\} 
			\subseteq \{Y: \Theta_d^Y=p\}\cup (2^\omega\setminus \mc H_p).
		\]
		Let
		\[
			D^{(d)}_{n}:=\left\{Y:  \exists k\,\,\left(\Theta^Y_{d}\restrict k\downarrow \And Y\in O_n^{\Theta^Y_{d}\restrict k}\right)\right\}.
		\]
		Then 
		\[
			D^{(d)}_n\subseteq O_n^p \cup \{Y: \Theta^Y_d\ne p\} \subseteq O^p_n\cup V_d\cup (2^\omega\setminus\mc H_p).
		\]
		Of course, $\mu_p (2^\omega\setminus\mc H_p)=0$. So 
		\[
			\mu_p(D^{(d)}_n)\le \mu_p(O^p_n)+\mu_p(V_d)\le 2^{-n}+ 2^{-d}.
		\]
		Form the diagonal $W_n=D^{(n)}_n$; then $\mu_p(W_n)\le 2^{-(n-1)}$ which goes effectively to zero, so $\{W_n\}_{n\in\omega}$ is a Hippocratic $\mu_p$-test. 

		Suppose for contradiction that $Y$ is Hippocrates $\mu_p$-random but not $\mu_p$-random. Since $Y$ is not $\mu_p$-random, for all $n$, $Y\in O^p_n$. Since $Y$ is Hippocrates $\mu_p$-random, there is a $d$ such that $Y\not\in V_d$ and for this $d$, $\Theta_d^Y=p$; in fact for all $n\ge d$, $\Theta_n^Y=p$.  Then $Y\in\cap_{n\ge d} D^{(n)}_n$. So $Y$ is not Hippocrates $\mu_p$-random. 
		\end{proof}

		\paragraph{An open problem.}
		Randomness tests $\{U_n\}_{n\in\omega}$ can be made more effective by requiring that $\mu_p(U_n)$ is a number that is computable from $p$. This is essentially Schnorr randomness \cite{Schnorr}. It is not hard to show that Schnorr randomness is sufficient for statistics to work, i.e.\ to ensure that a random sequence $Y$ computes the parameter $p$. More radically, we could require that $\mu_p(U_n)$ be actually computable. If we then relax the other side and let $U_n$ be $\Sigma^0_1(p)$, again a random sequence can be made to compute $p$, because one can ``pad'' $U_n$ to make it have $\mu_p$-measure equal to a computable number such as $2^{-n}$. 

		\begin{que}
		If we both require $U_n$ to be (uniformly) $\Sigma^0_1$ and $\mu_p(U_n)$ to be (uniformly) computable, is it still true that a random sequence must compute $p$?      
		\end{que}

	\begin{bibdiv}
		\begin{biblist}
			\bib{MRL}{article}{
			   author={Kjos-Hanssen, Bj{\o}rn},
			   title={Infinite subsets of random sets of integers},
			   journal={Math. Res. Lett.},
			   volume={16},
			   date={2009},
			   number={1},
			   pages={103--110},
			   issn={1073-2780},
			   review={\MR{2480564}},
			}

			\bib{Levin}{article}{
			   author={Levin, L. A.},
			   title={The concept of a random sequence},
			   language={Russian},
			   journal={Dokl. Akad. Nauk SSSR},
			   volume={212},
			   date={1973},
			   pages={548--550},
			   issn={0002-3264},
			   review={\MR{0366096 (51 \#2346)}},
			}

			\bib{ML}{article}{
			   author={Martin-L{\"o}f, Per},
			   title={The definition of random sequences},
			   journal={Information and Control},
			   volume={9},
			   date={1966},
			   pages={602--619},
			   issn={0890-5401},
			   review={\MR{0223179 (36 \#6228)}},
			}

			\bib{NiesBook}{book}{
			   author={Nies, Andr{\'e}},
			   title={Computability and randomness},
			   series={Oxford Logic Guides},
			   volume={51},
			   publisher={Oxford University Press},
			   place={Oxford},
			   date={2009},
			   pages={xvi+433},
			   isbn={978-0-19-923076-1},
			   review={\MR{2548883}},
			   doi={10.1093/acprof:oso/9780199230761.001.0001},
				status={DOI:10.1093/acprof:oso/9780199230761.001.0001}, 	
			}

			\bib{Schnorr}{book}{
			   author={Schnorr, Claus-Peter},
			   title={Zuf\"alligkeit und Wahrscheinlichkeit. Eine algorithmische
			   Begr\"undung der Wahrscheinlichkeitstheorie},
			   series={Lecture Notes in Mathematics, Vol. 218},
			   publisher={Springer-Verlag},
			   place={Berlin},
			   date={1971},
			   pages={iv+212},
			   review={\MR{0414225 (54 \#2328)}},
			}

			\bib{Soare}{book}{
			   author={Soare, Robert I.},
			   title={Recursively enumerable sets and degrees},
			   series={Perspectives in Mathematical Logic},
			   note={A study of computable functions and computably generated sets},
			   publisher={Springer-Verlag},
			   place={Berlin},
			   date={1987},
			   pages={xviii+437},
			   isbn={3-540-15299-7},
			   review={\MR{882921 (88m:03003)}},
			}

			\bib{Szczeklik}{book}{
				author={Szczeklik, Andrzej},
				title={Catharsis: on the art of medicine},
				year={2005},
				publisher={The University of Chicago Press},
				pages={172},
				isbn={9780226788692},
			}
		\end{biblist}
	\end{bibdiv}
\end{document}